\newtheorem{theorem}{Theorem}
\newtheorem{acknowledgement}[theorem]{Acknowledgement}
\newtheorem{definition}[theorem]{Definition}
\newtheorem{lemma}[theorem]{Lemma}
\newtheorem{proposition}[theorem]{Proposition}
\newtheorem{remark}[theorem]{Remark}
\newenvironment{proof}[1][Proof]{\noindent\textbf{#1.} }{\ \rule{0.5em}{0.5em}}
\begin{document}

\title{The Cauchy interlace theorem for symmetrizable matrices}
\author{Said Kouachi \\
Department of Mathematics, College of Sciences,\\
Qassim university, KSA}
\maketitle

\begin{abstract}
Symmetrizable matrices are those which are symmetric when multiplied by a
diagonal matrix with positive entries. The Cauchy interlace theorem states
that the eigenvalues of a real symmetric matrix interlace with those of any
principal submatrix (obtained by deleting a row-column pair of the original
matrix). In this paper we extend the Cauchy interlace theorem for symmetric
matrices to this large class, called symmetrizable matrices. This extension
is interesting by the fact that in the symmetric case, the Cauchy interlace
theorem together with the Courant-Fischer minimax theorem and Sylvester's
law of inertia, each one can be proven from the others and thus they are
essentially equivalent. The first two theorems have important applications
in the singular value and eigenvalue decompositions, the third is useful in
the development and analysis of algorithms for the symmetric eigenvalue
problem. Consequently various and several applications whom are contingent
on the symmetric condition may occur for this large class of not necessary
symmetric matrices and open the door for many applications in future
studies. We note that our techniques are based on the celebrated Dodgson's
identity \cite{Dod}.\newline
\newline
\textbf{Keywords: }Symmetrizable matrices, The Cauchy interlace theorem,
Eigenvalues.\newline
\newline
\textbf{Math. Subj. Classification 2010: }15A15, 15A18, 15B57, 11C20.
\end{abstract}

\section{Introduction.}

The Cauchy interlace theorem states that the eigenvalues of a real symmetric
matrix of order m interlace with those of any principal submatrix of order m
- 1. The idea behind to extend this theorem to symmetrizable matrices came
when we calculated, explicitly, the eigenvalues of a class of m order
Tridiagonal, Pentadiagonal and Heptadiagonal matrices (see \cite{Kou ELA}, 
\cite{Kou LJM} and \cite{Kou IJPAM}) whom are not necessary symmetric but
symmetrizable. We observed that their eigenvalues interlace with those of
their corresponding m-1 order principal submatrices.\newline
We begin by presenting a short and simple proof of the theorem based on the
well known Dodgson's Algorithm, then we prove its extension. At our
knowledge, this theorem is applicable only to symmetric matrices and our aim
is not to say that our result is new but we hope to provide the reader with
another technique to prove the Cauchy interlace theorem.

Proofs of this well known theorem have been based on Sylvester's law of
inertia \cite{Par}, the Courant-Fischer minimax theorem (see \cite{Gol-Van}, 
\cite{Hor-Joh}, \cite{Ike-Ina-Miy} and \cite{Bel}) and others more simple
are based on some properties of polynomials \cite{Fis}.

\section{The Cauchy interlace theorem for real symmetric matrices.}

The following theorem presents the more simplified form of the Cauchy
interlace theorem \cite{Cau}.

\begin{theorem}
(\textbf{The Cauchy interlace theorem})\label{The Cauchy interlace theorem}
If a row-column pair is deleted from a real symmetric matrix, then the
eigenvalues of the resulting matrix interlace those of the original one.
\end{theorem}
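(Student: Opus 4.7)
The plan is to derive a polynomial identity relating the characteristic polynomials of $A$ and several of its principal submatrices via Dodgson's condensation identity, and then to run an induction on the size $m$. Since a simultaneous permutation of rows and columns is a similarity that carries principal submatrices to principal submatrices, I may assume that the deleted row--column pair is the one in position $m$; call the resulting $(m-1)\times(m-1)$ submatrix $B$. Set $p(\lambda)=\det(\lambda I-A)$ and $q(\lambda)=\det(\lambda I-B)$; let $q^{\ast}(\lambda)$ be the characteristic polynomial of the principal submatrix of $A$ obtained by deleting the first row and column, let $r(\lambda)$ be that of the $(m-2)\times(m-2)$ principal submatrix obtained by deleting both the first and the last rows and columns, and let $s(\lambda)$ be the determinant of the $(m-1)\times(m-1)$ block of $\lambda I-A$ obtained by deleting the first row and the last column. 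Applying Dodgson's identity \cite{Dod} to $\lambda I-A$ with the two distinguished rows and columns indexed by $1$ and $m$, and using the symmetry of $A$ to identify the block with first row and last column removed as the transpose of the block with last row and first column removed, I obtain
\[
p(\lambda)\,r(\lambda)=q(\lambda)\,q^{\ast}(\lambda)-s(\lambda)^{2}.
\]
Evaluating this identity at any eigenvalue $\mu$ of $B$ gives $p(\mu)\,r(\mu)=-s(\mu)^{2}\le 0$, which is the central sign inequality.

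I would then argue by induction on $m$, the cases $m\le 2$ being direct. For the inductive step I apply the theorem to the principal submatrix with characteristic polynomial $q$: its eigenvalues are interlaced by the eigenvalues $\nu_{1}\le\cdots\le\nu_{m-2}$ of the further principal submatrix whose characteristic polynomial is $r$. Under the generic assumption that this interlacing is strict, a direct factor count in $r(\mu_{k})=\prod_{i=1}^{m-2}(\mu_{k}-\nu_{i})$ shows that $\operatorname{sign}(r(\mu_{k}))=(-1)^{m-1-k}$, so the central inequality forces $(-1)^{m-k}p(\mu_{k})\ge 0$. Combined with the facts that $p$ is monic of degree $m$, so $\operatorname{sign}p(\lambda)=(-1)^{m}$ for $\lambda\ll 0$ and $p(\lambda)>0$ for $\lambda\gg 0$, the intermediate value theorem places a root of $p$ in each of the $m$ closed intervals $(-\infty,\mu_{1}]$, $[\mu_{1},\mu_{2}],\dots,[\mu_{m-1},+\infty)$; since $p$ has exactly $m$ roots these account for them all, giving precisely the interlacing $\lambda_{1}\le\mu_{1}\le\lambda_{2}\le\cdots\le\mu_{m-1}\le\lambda_{m}$.

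The main obstacle is the degenerate situation in which some $\nu_{k}$ coincides with $\mu_{k}$ or with $\mu_{k+1}$; there $r(\mu_{k})=0$ and the bare sign argument is inconclusive. I would resolve this by a continuity argument: perturb $A$ to $A+\varepsilon D$ with a symmetric matrix $D$ chosen so that, for all sufficiently small $\varepsilon>0$, both $A+\varepsilon D$ and its principal submatrix have simple eigenvalues and the inductive strict--interlacing hypothesis holds. Running the clean argument above for each such $\varepsilon$ and letting $\varepsilon\to 0$, continuous dependence of eigenvalues on the matrix entries yields the (possibly non--strict) interlacing for $A$ itself.
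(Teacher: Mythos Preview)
Your argument is correct, and it shares with the paper the use of Dodgson's identity together with symmetry to produce a squared term; but from that point on the two proofs diverge.

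You evaluate the Dodgson identity at the eigenvalues $\mu_k$ of the \emph{submatrix} $B$, obtaining $p(\mu_k)r(\mu_k)\le 0$, and then run an \emph{induction on $m$}: the inductive hypothesis (interlacing of the $\nu_i$'s with the $\mu_k$'s) pins down $\operatorname{sign} r(\mu_k)$, hence $\operatorname{sign} p(\mu_k)$, and the roots of $p$ are located by the intermediate value theorem. You then clean up degeneracies by perturbation.

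The paper instead evaluates Dodgson at the eigenvalues $\lambda_p$ of the \emph{full matrix} $A$. Since $\det(A-\lambda_p I)=0$, the identity says that the product of \emph{any two} principal $(m-1)$--minors of $A-\lambda_p I$ is a square, so all of them share a sign. The paper then invokes the elementary--symmetric--function fact that the \emph{sum} of all $(m-1)$--principal minors of $A-\lambda_p I$ equals the product of its nonzero eigenvalues, whose sign is $(-1)^{p-1}$; hence each individual minor $P_k(\lambda_p)$ has sign $(-1)^{p-1}$, and interlacing follows directly with \emph{no induction}.

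What each approach buys: the paper's trick with the sum of principal minors is slicker---it avoids induction entirely and shows in one stroke that the interlacing holds for \emph{every} choice of deleted row--column pair simultaneously. Your route is more elementary in that it does not require recognizing that coefficient identity, and your explicit perturbation step handles the degenerate (repeated--eigenvalue) case more carefully than the paper, which tacitly assumes $\lambda_1<\cdots<\lambda_m$ in the body of the proof.
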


\begin{proof}
The celebrated Dodgson's identity \cite{Dod} states, for a square matrix $%
A=\left( a_{i,j}\right) _{1\leq i,j\leq m}$, the following 
\begin{equation}
\left. 
\begin{array}{c}
\det \left[ \left( a_{i,j}\right) _{1\leq i,j\leq m}\right] \det \left[
\left( a_{i,j}\right) _{\substack{ i\neq k,l \\ j\neq k,l}}\right] = \\ 
\\ 
\det \left[ \left( a_{i,j}\right) _{\substack{ i\neq l \\ j\neq l}}\right]
\det \left[ \left( a_{i,j}\right) _{\substack{ i\neq k \\ j\neq k}}\right]
-\det \left[ \left( a_{i,j}\right) _{\substack{ i\neq l \\ j\neq k}}\right]
\det \left[ \left( a_{i,j}\right) _{\substack{ i\neq k \\ j\neq l}}\right] ,%
\end{array}%
\right.   \label{dodgson}
\end{equation}%
\newline
for all $m>2$. If $\lambda _{1}<\lambda _{2}<....<\lambda _{m}$ lists the
eigenvalues of $A$, then by application of the above identity to the matrix $%
A-\lambda _{p}I$ for some fixed $1\leq p\leq m$, we deduce%
\begin{equation}
\left. 
\begin{array}{c}
\det \left[ \left( a_{i,j}\right) _{\substack{ i\neq l \\ j\neq l}}-\lambda
_{p}I_{m-1}\right] \det \left[ \left( a_{i,j}\right) _{\substack{ i\neq k \\ %
j\neq k}}-\lambda _{p}I_{m-1}\right] = \\ 
\\ 
\det \left[ \left( a_{i,j}\right) _{\substack{ i\neq l \\ j\neq k}}-\lambda
_{p}I_{m-1}\right] \det \left[ \left( a_{i,j}\right) _{\substack{ i\neq k \\ %
j\neq l}}-\lambda _{p}I_{m-1}\right] .%
\end{array}%
\right.   \label{identity}
\end{equation}%
Since the matrix $A-\lambda _{p}I$ \ is symmetric, then%
\begin{equation}
\det \left[ \left( a_{i,j}\right) _{\substack{ i\neq l \\ j\neq k}}-\lambda
_{p}I_{m-1}\right] =\det \left[ \left( a_{i,j}\right) _{\substack{ i\neq k
\\ j\neq l}}-\lambda _{p}I_{m-1}\right] ,  \label{formula}
\end{equation}%
and this shows that all the principal minors of order $m-1$ of the matrix $%
A-\lambda _{p}I$ $\ $have the same sign for a fixed $p=1,...,m.$ By using a
well known properties of the characteristic polynomial of the matrix $%
A-\lambda _{p}I$ $\ $which has $p-1$ negative eigenvalues, one null and $m-p$
positive, we deduce that the product $\Pi _{p-1}$ of the non zero
eigenvalues is equal to the sum of all its principal minors of order $m-1$
whom have the same sign as $\left( -1\right) ^{p-1}.$ Then, for a fixed $%
1\leq k\leq m$, the characteristic polynomial of the matrix $\left[ \left(
a_{i,j}\right) _{\substack{ i\neq k \\ j\neq k}}\right] $ 
\begin{equation}
P_{k}\left( \lambda \right) =\det \left[ \left( a_{i,j}\right) _{\substack{ %
i\neq k \\ j\neq k}}-\lambda I_{m-1}\right] ,  \label{Pk}
\end{equation}%
which is of degree $m-1,$ satisfies the following inequalities%
\begin{equation}
\left( -1\right) ^{p-1}P_{k}\left( \lambda _{p}\right) >0,\ \ \ \ p=1,...,m.
\label{Alter}
\end{equation}%
This ends the proof of the Cauchy interlace theorem for a real symmetric
matrix.
\end{proof}

\section{Results and Proofs.}

To introduce the class of matrices (not necessary symmetric) for whom the
Cauchy interlace theorem holds, we begin with the following

\begin{definition}
A square matrix $A$ of order $m>2$ is called symmetrizable if it is sign
symmetric, i.e.%
\begin{equation}
a_{ij}=a_{ji}=0\text{ or }a_{ij}.a_{ji}>0,\ i\neq j=1,2,...,m.  \label{sign}
\end{equation}%
and if for all permutation $\sigma $ of the set of integers $\left\{
1,2,...,m\right\} $, we have%
\begin{equation}
\underset{i=1}{\overset{m}{\Pi }}a_{i.\sigma _{i}}=\underset{i=1}{\overset{m}%
{\Pi }}a_{\sigma _{i}.i}.  \label{eq}
\end{equation}
\end{definition}

When $m=2,$ we simply suppose $a_{12}.a_{21}>0$ and when $m=3$, by
application of the definition, $A$ is symmetrizable if (\ref{sign}) is
satisfied and $a_{12}.a_{23}.a_{31}=a_{21}.a_{32}a_{13}$; since the other
equalities (\ref{eq}) are automatically satisfied. For example, all the
matrices%
\begin{equation*}
A=\left( 
\begin{array}{ccc}
a & -3 & 1 \\ 
-6 & b & 4 \\ 
5 & 10 & c%
\end{array}%
\right) ,
\end{equation*}%
are symmetrizable for all real numbers $a,$ $b$ and $c.$\newline
In \cite{Fom}, Lemma 3.2, the authors characterized symmetrizable matrices
as follows

\begin{proposition}
A square matrix of order m is symmetrizable if and only if it is symmetric
by sign and for all k=3,...,m, we have%
\begin{equation*}
a_{i_{1},i_{2}}a_{i_{2},i_{3}}...a_{i_{k},i_{1}}=a_{i_{2},i_{1}}a_{i_{3},i_{2}}...a_{i_{1},i_{k}},
\end{equation*}%
for all finite sequence $i_{1},\ i_{2},...,i_{k}$.\newline
However, we present a more simplified version of the above proposition and
give the following characterization of symmetrizable matrices.
\end{proposition}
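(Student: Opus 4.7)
The proposition's left-hand side — ``$A$ is symmetrizable'' — unfolds, by the Definition just given, into the conjunction of sign-symmetry (\ref{sign}) together with the permutation-product equality (\ref{eq}) for \emph{every} permutation $\sigma$. The right-hand side asserts sign-symmetry plus the cycle-product identity for every $k\in\{3,\ldots,m\}$ and every sequence $i_1,\ldots,i_k$. Because sign-symmetry occurs on both sides, proving the biconditional ``symmetrizable $\iff$ sign-symmetric $+$ cycle identity'' amounts, under the common hypothesis of sign-symmetry, to proving that (\ref{eq}) for all $\sigma$ is equivalent to the cycle-product identity for all $k\ge3$ and all sequences. Concretely, I would structure the proof as two implications that each reinstate the missing half of symmetrizability from the other.

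For the direction ``$A$ symmetrizable $\Rightarrow$ $A$ is sign-symmetric $+$ cycle identity,'' sign-symmetry is immediate from the Definition, so only the cycle identity must be established. Given distinct indices $i_1,\ldots,i_k$ with $k\geq 3$, I would specialize (\ref{eq}) to the permutation $\sigma$ whose only nontrivial cycle is $i_1\to i_2\to\cdots\to i_k\to i_1$ and which fixes every other index. Both sides of (\ref{eq}) then factor as the corresponding forward/backward cycle-product multiplied by the identical diagonal tail $\prod_{j\notin\{i_1,\ldots,i_k\}}a_{j,j}$, and cancellation yields the cycle identity, thereby producing the second condition in the conjunction that characterizes symmetrizability. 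Sequences with repeated indices can be split at each repetition into shorter cycle-products, so it suffices to treat the distinct case.

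For the converse ``sign-symmetric $+$ cycle identity $\Rightarrow$ $A$ symmetrizable,'' sign-symmetry is given as hypothesis, so only (\ref{eq}) needs to be recovered in order to meet both requirements of the Definition. Fix any $\sigma$ and decompose it into disjoint cycles $C_1,\ldots,C_r$. Since disjoint cycles act on disjoint index sets, each of the two products in (\ref{eq}) factors along this decomposition, and the equality reduces to a cycle-wise check: $1$-cycles contribute identical diagonal entries $a_{j,j}$ on both sides; a $2$-cycle on $\{p,q\}$ contributes $a_{p,q}a_{q,p}$ on both sides; and a cycle of length $k\ge 3$ is handled precisely by the hypothesized cycle-product identity applied to the sequence of its elements. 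Multiplying these cycle-wise equalities reconstructs (\ref{eq}) for $\sigma$, so $A$ satisfies every clause of the Definition and is therefore symmetrizable.

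The main obstacle is the cancellation step in the forward direction when some complementary diagonal $a_{j,j}$ with $j\notin\{i_1,\ldots,i_k\}$ vanishes, causing (\ref{eq}) applied to the bare cyclic $\sigma$ to degenerate to $0=0$ and recover nothing. To circumvent this I would replace the identity-on-complement portion of $\sigma$ by a product of transpositions $(j,j')$ for which the weight $a_{j,j'}a_{j',j}$ is nonzero — such transpositions exist by sign-symmetry whenever the off-diagonal support of $A$ outside the cycle is nonempty — and then apply (\ref{eq}) to this modified permutation before cancelling. In the residual structural case where no such nonzero transposition is available, the relevant rows and columns of $A$ are sparse enough that both sides of the target cycle identity vanish directly. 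Making this edge-case analysis rigorous, rather than the main algebraic idea, is where the proof will demand the most care.
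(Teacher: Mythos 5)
You should first be aware that the paper does not actually prove this Proposition: it is quoted from \cite{Fom}, Lemma 3.2, and the proof environment that follows it in the source belongs to the \emph{next} Proposition (the one about principal submatrices), so your attempt has to stand on its own. Your converse direction does stand: decomposing an arbitrary permutation into disjoint cycles and matching the two products of (\ref{eq}) cycle by cycle (diagonal entries for fixed points, $a_{pq}a_{qp}$ for transpositions, the hypothesized identity for cycles of length at least three) is correct, and your reduction of sequences with repeated indices to shorter cyclic products is also sound.

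The forward direction, however, has a genuine gap, and it is exactly the one you flagged --- but your proposed repair fails. After specializing (\ref{eq}) to the permutation whose unique nontrivial cycle is $i_1\rightarrow i_2\rightarrow\cdots\rightarrow i_k\rightarrow i_1$, you must cancel the factor $\prod_{j\notin\{i_1,\dots,i_k\}}a_{jj}$, and if some $a_{jj}=0$ the identity degenerates to $0=0$. Your fallback --- that when no nonzero transposition is available on the complement, ``both sides of the target cycle identity vanish directly'' --- is false. Take $m=4$ and
\begin{equation*}
A=\left(
\begin{array}{cccc}
0 & 1 & 1 & 0 \\
1 & 0 & 2 & 0 \\
1 & 1 & 0 & 0 \\
0 & 0 & 0 & 0
\end{array}
\right) .
\end{equation*}
This matrix is sign symmetric, and for every permutation $\sigma$ each side of (\ref{eq}) contains either the entry $a_{44}=0$ or an off-diagonal entry of the fourth row or column, all of which vanish; so $A$ satisfies the paper's Definition. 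Yet $a_{12}a_{23}a_{31}=2\neq 1=a_{21}a_{32}a_{13}$, so the $k=3$ cycle identity fails and neither side is zero. This shows the forward implication is not merely delicate to prove from the Definition --- it is false as stated: condition (\ref{eq}) is strictly weaker than the family of cycle identities once $A$ has enough zero entries. (Consistently, this $A$ admits no positive diagonal symmetrizer, since $d_1=d_2=d_3$ and $2d_2=d_3$ force $d_2=0$.) The workable route for the forward direction is to start from the symmetrizer characterization recalled in the paper's Remark: from $d_{i_j}a_{i_j,i_{j+1}}=d_{i_{j+1}}a_{i_{j+1},i_j}$ multiply around the cycle and cancel the strictly positive $d$'s. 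If you insist on arguing from the permutation-product Definition, you need an extra hypothesis (for instance a nonvanishing diagonal) or the Definition itself has to be replaced.
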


\begin{proposition}
A square matrix of order m is symmetrizable if and only if all its principal
submatrices of order m-1 are symmetrizable.
\end{proposition}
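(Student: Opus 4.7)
The plan is to rely on the cycle characterization of the preceding Proposition, which says $A$ is symmetrizable precisely when it is sign symmetric and, for every $k\geq 3$ and every finite sequence $i_1,\ldots,i_k$, one has $a_{i_1,i_2}a_{i_2,i_3}\cdots a_{i_k,i_1}=a_{i_2,i_1}a_{i_3,i_2}\cdots a_{i_1,i_k}$. With this at hand, both implications become combinatorial statements about index sequences. The forward direction is immediate: if $A$ is symmetrizable and $B$ is the principal submatrix obtained by deleting row-column $k$, sign symmetry is inherited entry-by-entry, and any index sequence drawn from $\{1,\ldots,m\}\setminus\{k\}$ is also a sequence in $\{1,\ldots,m\}$, so the cycle-product balance transfers from $A$ to $B$.

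For the reverse implication, assume every principal submatrix of order $m-1$ is symmetrizable. Sign symmetry of $A$ is automatic, since any pair $(a_{i,j},a_{j,i})$ with $i\neq j$ sits inside the submatrix obtained by deleting any third row-column $k$, available because $m\geq 3$. To verify the cycle-product condition on a sequence $i_1,\ldots,i_k$, I would first apply the standard closed-walk reduction: at any repeated index the walk splits into two shorter closed walks whose products multiply, on both sides, to the original's. So it is enough to treat simple cycles. A simple cycle of length at most $m-1$ involves at most $m-1$ indices and hence lies inside some $(m-1)$-principal submatrix, where the balance holds by hypothesis. Only the case of a simple $m$-cycle remains.

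For a simple $m$-cycle $(i_1,\ldots,i_m)$, the plan is to split it into a simple $(m-1)$-cycle and a simple $3$-cycle sharing a chord. Using the chord $(i_{m-1},i_1)$, the cycle product on $A$ equals the $(m-1)$-cycle product along $i_1\to\cdots\to i_{m-1}\to i_1$ times the $3$-cycle product along $i_1\to i_{m-1}\to i_m\to i_1$, divided by the factor $a_{i_{m-1},i_1}\,a_{i_1,i_{m-1}}$; the analogous identity holds for the reversed products. The $(m-1)$-cycle lies in the submatrix deleting row-column $i_m$, and the $3$-cycle lies in any submatrix deleting an index outside $\{i_1,i_{m-1},i_m\}$, which exists as soon as $m\geq 4$. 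Both balance by hypothesis, and canceling the common chord factor delivers the $m$-cycle identity. The main obstacle is the degenerate case where the chord $a_{i_{m-1},i_1}$ vanishes (equivalently $a_{i_1,i_{m-1}}=0$, by sign symmetry): the cancellation step fails, and one must either pick a different non-adjacent pair in the cycle whose corresponding entries are nonzero, or argue that if no such chord exists then some edge of the cycle itself must be zero, making both sides of the balance vanish trivially. Tracking this case carefully is the delicate point of the proof.
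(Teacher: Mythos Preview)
Your plan coincides with the paper's own argument: both reduce the reverse implication to a full $m$-cycle $i_1\to i_2\to\cdots\to i_m\to i_1$ and split it along a chord into two shorter cycles, each sitting inside a proper principal submatrix, then multiply the two balanced identities and cancel the common chord factor. The paper does exactly this (splitting at an interior index $i_{j+1}$ with $2\le j\le m-2$) and disposes of the cancellation by simply writing ``taking in account that $a_{i_{j+1},i_1}a_{i_1,i_{j+1}}\neq 0$,'' with no justification. So the obstacle you single out as ``the delicate point'' is precisely the step the paper glosses over.

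That obstacle is not merely delicate; it is fatal, and neither of your two suggested rescues can work, because the proposition as stated is false. For $m=4$ take
\[
A=\begin{pmatrix} 0 & 1 & 0 & 1\\ 1 & 0 & 1 & 0\\ 0 & 1 & 0 & 1\\ 2 & 0 & 1 & 0 \end{pmatrix}.
\]
This matrix is sign symmetric, and every $3\times 3$ principal submatrix is symmetrizable: each $3$-cycle product contains one of the zero entries $a_{13},a_{31},a_{24},a_{42}$, so both sides of the cycle identity vanish. Yet the $4$-cycle gives $a_{12}a_{23}a_{34}a_{41}=2\neq 1=a_{21}a_{32}a_{43}a_{14}$, so $A$ is not symmetrizable. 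Here \emph{every} chord of the $4$-cycle is zero while \emph{every} edge is nonzero, so there is no alternative chord to choose and no vanishing edge to make the identity trivial. Your instinct to flag this case was correct; the paper's proof and the proposition itself only survive under an extra hypothesis such as all off-diagonal entries being nonzero.
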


\begin{proof}
First the matrix is symmetric by sign if and only if all its principal
submatrices are too. If A is symmetrizable, then by choosing in (\ref{eq})
any permutation $\sigma $ satisfying $\sigma _{i}=i$, for a fixed $%
i=1,2,...,m$, we can conclude that any principal submatrix of a
symmetrizable matrix A is also. Suppose that all principal submatrices of
order m-1 are symmetrizable. By writing $i_{k+1}=\sigma \left( i_{k}\right)
,\ k=1,...,m-1$ with $i_{1}=1$, then $i_{m+1}=\sigma _{1}^{m}=1$ and for all 
$j=2,...,m-2$, we have%
\begin{equation*}
\underset{i=1}{\overset{m}{\Pi }}a_{i.\sigma _{i}}=:\underset{k=1}{\overset{m%
}{\Pi }}a_{i_{k},i_{k+1}}=\left( \underset{k=1}{\overset{j}{\Pi }}%
a_{i_{k},i_{k+1}}\right) \left( \underset{k=j+1}{\overset{m}{\Pi }}%
a_{i_{k},i_{k+1}}\right) .
\end{equation*}%
Since the (j+1) order principal submatrix with row-column pairs $%
i_{1},i_{2},...,i_{j+1}$\ is symmetrizable, then we have%
\begin{equation*}
\left( \underset{k=1}{\overset{j}{\Pi }}a_{i_{k},i_{k+1}}\right)
.a_{i_{j+1},i_{1}}=\left( \underset{k=1}{\overset{j}{\Pi }}%
a_{i_{k+1},i_{k}}\right) .a_{i_{1},i_{j+1}}.
\end{equation*}%
Analogously, the (m-j) order principal submatrix matrix with row-column
pairs $i_{j+1},i_{j+2},...,i_{m},i_{1}$\ is symmetrizable, then we have%
\begin{equation*}
\left( \underset{k=j+1}{\overset{m}{\Pi }}a_{i_{k},i_{k+1}}\right)
.a_{i_{1},i_{j+1}}=\left( \underset{k=j+1}{\overset{m}{\Pi }}%
a_{i_{k+1},i_{k}}\right) .a_{i_{j+1},i_{1}},
\end{equation*}%
then, taking in account that $a_{i_{j+1},i_{1}}a_{i_{1},i_{j+1}}\neq 0$,
deduce that%
\begin{equation*}
\left( \underset{k=1}{\overset{j}{\Pi }}a_{i_{k},i_{k+1}}\right) \left( 
\underset{k=j+1}{\overset{m}{\Pi }}a_{i_{k},i_{k+1}}\right) =\left( \underset%
{k=1}{\overset{j}{\Pi }}a_{i_{k+1},i_{k}}\right) \left( \underset{k=j+1}{%
\overset{m}{\Pi }}a_{i_{k+1},i_{k}}\right) 
\end{equation*}%
and this gives (\ref{eq}).
\end{proof}

\begin{remark}
It is shown that a matrix A is symmetrizable if and only if there exists a
diagonal matrix D with positive entries (called Symmetrizer), such that the
matrix D.A is symmetric (see \cite{Fis}).
\end{remark}

Our main result is the following

\begin{theorem}
\label{nonsymmetric I.T.}The eigenvalues of a real symmetrizable matrix A of
order m are all real and interlace with those of any principal submatrix of
order m - 1.
\end{theorem}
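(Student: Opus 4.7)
The plan is to reduce the result to Theorem~\ref{The Cauchy interlace theorem} by exploiting the symmetrizer supplied by Remark~5. Since $A$ is symmetrizable, there exists a positive diagonal matrix $D$ such that $DA$ is symmetric. Writing $D^{1/2}$ for the entrywise positive square root, set $S:=D^{1/2}AD^{-1/2}$. Then
\begin{equation*}
S = D^{-1/2}(DA)D^{-1/2},
\end{equation*}
which is manifestly symmetric, so $A$ is similar to the real symmetric matrix $S$. In particular the eigenvalues of $A$ are all real and coincide with those of $S$.

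Next, fix $1\le k\le m$ and let $A^{(k)}$ denote the principal submatrix of $A$ obtained by deleting row and column $k$. Since $D$ and $D^{-1/2}$ are diagonal, deleting the $k$-th row and the $k$-th column commutes with the conjugation $X\mapsto D^{1/2}XD^{-1/2}$. Hence the corresponding principal submatrix $S^{(k)}$ of $S$ satisfies
\begin{equation*}
S^{(k)} = D_k^{1/2}\,A^{(k)}\,D_k^{-1/2},
\end{equation*}
where $D_k$ is $D$ with its $k$-th diagonal entry removed. Thus $A^{(k)}$ is similar to the real symmetric matrix $S^{(k)}$, and the two matrices share the same real spectrum. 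Applying Theorem~\ref{The Cauchy interlace theorem} to $S$ and $S^{(k)}$ gives the interlacing of their eigenvalues, and this transfers verbatim to $A$ and $A^{(k)}$ through the similarities above.

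The only step requiring care is the commutation of ``take the $k$-th principal submatrix'' with ``conjugate by $D^{1/2}$'', which is immediate from the diagonality of $D$. An alternative, more in the spirit of the paper, would be to re-run the Dodgson argument of Theorem~\ref{The Cauchy interlace theorem} directly on $A-\lambda_p I$: the mixed minors $\det\!\bigl[(a_{i,j})_{i\ne l,\,j\ne k}-\lambda_p I_{m-1}\bigr]$ and $\det\!\bigl[(a_{i,j})_{i\ne k,\,j\ne l}-\lambda_p I_{m-1}\bigr]$ of $A-\lambda_p I$ are no longer equal, but they differ from the corresponding (equal) mixed minors of $S-\lambda_p I$ by the positive scalars $\sqrt{d_k/d_l}$ and $\sqrt{d_l/d_k}$, so their product is positive. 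From here identity~(\ref{identity}) again forces all $(m-1)$-principal minors of $A-\lambda_p I$ to share a common sign, and the sum-of-principal-minors argument of Theorem~\ref{The Cauchy interlace theorem} pins that sign down to $(-1)^{p-1}$, yielding the alternation $(-1)^{p-1}P_k(\lambda_p)>0$ and hence the interlacing. I expect the similarity route above to be the cleanest and genuinely self-contained presentation, with no real obstacle beyond a careful invocation of Remark~5.
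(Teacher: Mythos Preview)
Your proof is correct, but the route is genuinely different from the paper's.

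You invoke Remark~5 to produce a positive diagonal symmetrizer $D$, conjugate by $D^{1/2}$ to obtain a real symmetric matrix $S$ similar to $A$, observe that diagonal conjugation commutes with taking principal submatrices, and then simply quote Theorem~\ref{The Cauchy interlace theorem} for $S$ and $S^{(k)}$. This is clean and short; realness of the spectrum and the interlacing both come for free from the symmetric case.

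The paper deliberately avoids the symmetrizer. It works directly from the cycle condition in the definition of symmetrizability and first proves a Lemma stating that
\[
a_{lk}\det\!\left[(a_{i,j})_{\substack{i\neq l\\ j\neq k}}-\lambda I_{m-1}\right]
= a_{kl}\det\!\left[(a_{i,j})_{\substack{i\neq k\\ j\neq l}}-\lambda I_{m-1}\right],
\]
by matching the polynomial coefficients term by term, each equality coming from symmetrizability of an appropriate principal submatrix. Combined with sign symmetry ($a_{kl}a_{lk}>0$), this makes the product of the two mixed minors nonnegative, and Dodgson's identity~(\ref{dodgson}) then forces all $(m{-}1)$-principal minors of $A-\lambda_p I$ to share a sign, exactly as in the symmetric proof. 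Realness of the eigenvalues is handled separately by an induction on $m$, again driven by Dodgson's identity.

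What each approach buys: yours is far more economical and transparent, but it rests on Remark~5, which the paper only cites and does not prove, and it sidesteps the Dodgson technique that the paper explicitly sets out to showcase. The paper's argument is longer but self-contained relative to its own definition and illustrates the advertised method. Your closing sketch---that the mixed minors of $A$ and $S$ differ by the positive factors $\sqrt{d_k/d_l}$ and $\sqrt{d_l/d_k}$---is in effect the paper's Lemma read through the symmetrizer, and nicely connects the two viewpoints.
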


The proof of the theorem is based on an equality analogous to (\ref{formula}%
) which is summarized by the following

\begin{lemma}
For any symmetrizable matrix $A$, we have%
\begin{equation}
a_{lk}.\det \left[ \left( a_{i,j}\right) _{\substack{ i\neq l  \\ j\neq k}}%
-\lambda I_{m-1}\right] =a_{kl}.\det \left[ \left( a_{i,j}\right) 
_{\substack{ i\neq k  \\ j\neq l}}-\lambda I_{m-1}\right] ,
\label{formula sign}
\end{equation}%
for all real $\lambda $ and all integers $k,l=1,2,...,m.$
\end{lemma}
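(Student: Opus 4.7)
The strategy is to reduce the identity to the symmetric case by conjugating through the symmetrizer $D$ promised by the Remark above. Concretely, there exists a diagonal matrix $D=\mathrm{diag}(d_1,\dots,d_m)$ with $d_i>0$ such that $DA$ is symmetric. Since $\lambda D$ is itself symmetric (being diagonal), the matrix $D(A-\lambda I)=DA-\lambda D$ is also symmetric, and its $(i,j)$ entry factors as $d_i(A-\lambda I)_{ij}$.

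Next, I would invoke the elementary fact that for a symmetric matrix $B$ and any index sets $R,C\subset\{1,\dots,m\}$ of equal size, the submatrix $B_{R,C}$ is the transpose of $B_{C,R}$, so their determinants agree. Applying this to $B=D(A-\lambda I)$ with $R=\{1,\dots,m\}\setminus\{l\}$ and $C=\{1,\dots,m\}\setminus\{k\}$, and pulling the diagonal factor out of the determinant, I obtain
$$\Bigl(\prod_{i\neq l}d_i\Bigr)\det\bigl[(A-\lambda I)_{i\neq l,\,j\neq k}\bigr]=\Bigl(\prod_{i\neq k}d_i\Bigr)\det\bigl[(A-\lambda I)_{i\neq k,\,j\neq l}\bigr].$$
Cancelling the common factor $\prod_{i\neq k,l}d_i$ reduces this to $d_kM_1=d_lM_2$, where $M_1$ and $M_2$ denote the two determinants appearing in the desired identity.

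To finish, the symmetry of $DA$ gives the scalar relation $d_la_{lk}=d_ka_{kl}$; multiplying $d_kM_1=d_lM_2$ by $a_{lk}$ and substituting $a_{lk}d_l=a_{kl}d_k$ on the right, then dividing by the positive scalar $d_k$, produces exactly $a_{lk}M_1=a_{kl}M_2$.

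The only genuine subtlety, and what I would flag as the main obstacle, is the interpretation of the notation $(a_{ij})_{i\neq l,\,j\neq k}-\lambda I_{m-1}$: consistency with the Dodgson application in the symmetric proof forces one to read it as the $(m-1)\times(m-1)$ minor of $A-\lambda I$ obtained by deleting row $l$ and column $k$, rather than as subtracting $\lambda$ from the positional diagonal of the already-deleted submatrix (the two agree only for the principal case $k=l$). Once this convention is fixed, the argument is routine, and the deep content has really been outsourced to the existence of the positive diagonal symmetrizer $D$.
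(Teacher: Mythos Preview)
Your argument is correct, and it takes a genuinely different route from the paper's.

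The paper proves the identity ``from scratch'' using only the cyclic-product definition of symmetrizability: after reducing to $k=1$, $l=2$ it writes the two sides as polynomials $P(\lambda)$ and $Q(\lambda)$ of degree $m-2$, transposes one of the determinants, and then matches coefficients term-by-term. Each coefficient of $\lambda^{m-l}$ is a sum, over index sets $\{k_1,\dots,k_{l-2}\}\subset\{3,\dots,m\}$, of products of entries; the equality of corresponding summands is deduced from the symmetrizability of the principal submatrix on rows/columns $\{1,2,k_1,\dots,k_{l-2}\}$. In other words, the paper exercises the definition (\ref{eq}) directly on every relevant principal submatrix.

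You instead invoke the Remark that a symmetrizable $A$ admits a positive diagonal symmetrizer $D$ with $DA$ symmetric, and then the whole lemma collapses to a one-line determinant manipulation: pull $D$ through the minor, use $\det B_{R,C}=\det B_{C,R}$ for symmetric $B$, cancel the common diagonal product, and finish with the scalar relation $d_la_{lk}=d_ka_{kl}$. This is much shorter and conceptually cleaner; the trade-off is that you have outsourced the substance to the existence of $D$, which the paper only quotes from \cite{Fis} and does not prove. The paper's coefficient-matching argument, by contrast, is self-contained from its own Definition~2. Your closing remark about the notational convention for $(a_{ij})_{i\neq l,\,j\neq k}-\lambda I_{m-1}$ is apt and matches how the paper itself uses the expression in the Dodgson application.
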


\begin{proof}
Simultaneously permuting rows and columns, if necessary, we may assume that $%
k=1,$ $l=2$ and $a_{12}.a_{21}>0$ (i.e.$\neq 0$)$.$ We should prove the
following 
\begin{equation}
P\left( \lambda \right) =Q\left( \lambda \right) ,\text{ for all real }%
\lambda ,  \label{formula sign l=1}
\end{equation}%
where%
\begin{equation*}
P\left( \lambda \right) =a_{12}.\det \left[ \left( a_{i,j}\right) 
_{\substack{ i\neq 1 \\ j\neq 2}}-\lambda I_{m-1}\right] ,
\end{equation*}%
and%
\begin{equation*}
Q\left( \lambda \right) =a_{21}.\det \left[ \left( a_{i,j}\right) 
_{\substack{ i\neq 2 \\ j\neq 1}}-\lambda I_{m-1}\right] .
\end{equation*}%
The left and right parts of formula (\ref{formula sign l=1})\ can be
written, respectively, as follows%
\begin{equation}
P\left( \lambda \right) =a_{12}.\left\vert 
\begin{array}{ccccc}
a_{21} & a_{23} & \cdots  & a_{2,\left( m-1\right) } & a_{2,m} \\ 
a_{31} & a_{33}-\lambda  & \cdots  & a_{3,\left( m-1\right) } & a_{3,m} \\ 
\vdots  & \vdots  & \ddots  & \vdots  & \vdots  \\ 
a_{\left( m-1\right) ,1} & a_{\left( m-1\right) ,3} & \cdots  & a_{\left(
m-1\right) ,\left( m-1\right) }-\lambda  & a_{\left( m-1\right) ,m} \\ 
a_{m,1} & a_{m,3} & \cdots  & a_{m,\left( m-1\right) } & a_{m,m}-\lambda 
\end{array}%
\right\vert ,  \label{P}
\end{equation}%
and%
\begin{equation*}
Q\left( \lambda \right) =a_{21}.\left\vert 
\begin{array}{ccccc}
a_{12} & a_{13} & \cdots  & a_{1,\left( m-1\right) } & a_{1,m} \\ 
a_{32} & a_{33}-\lambda  & \cdots  & a_{3,\left( m-1\right) } & a_{3,m} \\ 
\vdots  & \vdots  & \ddots  & \vdots  & \vdots  \\ 
a_{\left( m-1\right) ,2} & a_{\left( m-1\right) ,3} & \cdots  & a_{\left(
m-1\right) ,\left( m-1\right) }-\lambda  & a_{\left( m-1\right) ,m} \\ 
a_{m,2} & a_{m,3} & \cdots  & a_{m,\left( m-1\right) } & a_{m,m}-\lambda 
\end{array}%
\right\vert .
\end{equation*}%
Since a square matrix and its transpose have the same determinant, then%
\begin{equation}
Q\left( \lambda \right) =a_{21}.\left\vert 
\begin{array}{ccccc}
a_{12} & a_{32} & \cdots  & a_{\left( m-1\right) ,2} & a_{m,2} \\ 
a_{13} & a_{33}-\lambda  & \cdots  & a_{\left( m-1\right) ,3} & a_{m,3} \\ 
\vdots  & \vdots  & \ddots  & \vdots  & \vdots  \\ 
a_{1,\left( m-1\right) } & a_{3,\left( m-1\right) } & \cdots  & a_{\left(
m-1\right) ,\left( m-1\right) }-\lambda  & a_{m,\left( m-1\right) } \\ 
a_{1,m} & a_{3,m} & \cdots  & a_{\left( m-1\right) ,m} & a_{m,m}-\lambda 
\end{array}%
\right\vert .  \label{Q}
\end{equation}%
As $A$ is symmetrizable, then the matrix $A-\lambda I_{m}$ and all its
principal submatrices are symmetrizable for all real $\lambda .$ By
developing the determinants in formulas (\ref{P}) and (\ref{Q}) each one
with respect to its first row for example, we can remark that each
coefficient in the (m-2) degree polynomial $P\left( \lambda \right) $ is
equal to its analogous in the expression of the same degree polynomial $%
Q\left( \lambda \right) $:\newline
Indeed, the coefficient of $\lambda ^{m-2}$ is $\left( -1\right)
^{m-2}a_{12}a_{21}$ for both $P\left( \lambda \right) $ and $Q\left( \lambda
\right) ,$ that of $\lambda ^{m-3}$ is $\left( -1\right) ^{m-3}a_{12}\overset%
{m}{\underset{k=3}{\sum }}\left\vert 
\begin{array}{cc}
a_{21} & a_{2,k} \\ 
a_{k,1} & a_{k,k}%
\end{array}%
\right\vert $ for $P\left( \lambda \right) $ and $\left( -1\right)
^{m-3}a_{21}\overset{m}{\underset{k=3}{\sum }}\left\vert 
\begin{array}{cc}
a_{12} & a_{k,2} \\ 
a_{1,k} & a_{k,k}%
\end{array}%
\right\vert $ for $Q\left( \lambda \right) $, whom are equal since the
subamtrices%
\begin{equation*}
A_{k}=\left( 
\begin{array}{ccc}
a_{11} & a_{12} & a_{1,k} \\ 
a_{21} & a_{22} & a_{2,k} \\ 
a_{k,1} & a_{k,2} & a_{k,k}%
\end{array}%
\right) ,
\end{equation*}%
are symmetrizable for all $k=3,...,m.$ For all integer $l\geq 4$, the
coefficient of $\lambda ^{m-l}$ is 
\begin{equation*}
\left( -1\right) ^{m-l}a_{12}\underset{3\leq k_{1}<k_{2}<...<k_{l-2}\leq m}{%
\sum }\left\vert 
\begin{array}{cccc}
a_{21} & a_{2,k_{1}} & \cdots  & a_{2,k_{l-2}} \\ 
a_{k_{1},1} & a_{k_{1},k_{1}} & \cdots  & a_{k_{1},1} \\ 
\vdots  & \vdots  & \ddots  & \vdots  \\ 
a_{k_{l-2},1} & a_{k_{l-2},k_{1}} & \cdots  & a_{k_{l-2},k_{l-2}}%
\end{array}%
\right\vert 
\end{equation*}%
for $P\left( \lambda \right) $ and 
\begin{equation*}
\left( -1\right) ^{m-l}a_{21}\underset{3\leq k_{1}<k_{2}<...<k_{l-2}\leq m}{%
\sum }\left\vert 
\begin{array}{cccc}
a_{12} & a_{k_{1},2} & \cdots  & a_{k_{l-2},2} \\ 
a_{1,k_{1}} & a_{k_{1},k_{1}} & \cdots  & a_{k_{l-2},k_{1}} \\ 
\vdots  & \vdots  & \ddots  & \vdots  \\ 
a_{1,k_{l-2}} & a_{k_{1},k_{l-2}} & \cdots  & a_{k_{l-2},k_{l-2}}%
\end{array}%
\right\vert 
\end{equation*}%
for $Q\left( \lambda \right) $, whom are equal since the subamtrices%
\begin{equation*}
A_{k_{1},k_{2},...,k_{l-2}}=\left( 
\begin{array}{cccccc}
a_{11} & a_{12} & a_{1,k_{1}} & a_{1,k_{2}} & \cdots  & a_{1,k_{l-2}} \\ 
a_{21} & a_{22} & a_{2,k_{1}} & a_{2,k_{2}} & \cdots  & a_{2,k_{l-2}} \\ 
a_{k_{1},1} & a_{k_{1},2} & a_{k_{1},k_{1}} & a_{k_{1},k_{2}} & \cdots  & 
a_{k_{1},k_{l-2}} \\ 
a_{k_{2},1} & a_{k_{2},2} & a_{k_{2},k_{1}} & a_{k_{2},k_{2}} & \cdots  & 
a_{k_{2},k_{l-2}} \\ 
\vdots  & \vdots  & \vdots  & \vdots  & \ddots  & \vdots  \\ 
a_{k_{l-2},1} & a_{k_{l-2},2} & a_{k_{l-2},k_{1}} & a_{k_{l-2},k_{2}} & 
\cdots  & a_{k_{l-2},k_{l-2}}%
\end{array}%
\right) 
\end{equation*}%
are symmetrizable for all $3\leq k_{1}<k_{2}<...<k_{l-2}\leq m.$ This gives (%
\ref{formula sign l=1}) and ends the proof of the Lemma.
\end{proof}

\begin{proof}
(of Theorem \ref{nonsymmetric I.T.}) Let $\lambda _{1}<\lambda
_{2}<...<\lambda _{m}$ the eigenvalues of the symmetrizable matrix $A$, then
by application of the Lemma and taking in account that $a_{lk}.a_{lk}>0$,
for all $k,l=1,2,...,m,$ we can conclude that the two determinants in (\ref%
{formula sign}) have the same sign for a fixed $p=1,2,...,m$. The Dodgson's
algorithm (\ref{dodgson}) applied to the matrix $A-\lambda _{p}I_{m}$ gives
the formula (\ref{identity}) from which we can deduce, in the same way as
when the matrix $A$ is symmetric, that all principal minors of order $m-1$
of the matrix $A-\lambda _{p}I$ $\ $have the same sign. The rest of the
proof can be obtained by following the same reasoning as in the proof of the
Cauchy interlace theorem for symmetric matrices given is the beginning of
the above section.\newline
To prove, via the Dodgson's algorithm, that the eigenvalues of the
symmetrizable matrix $A$ are all real, we shall do this by induction on $m$:%
\newline
For $m=3,$ the Dodgson's algorithm (\ref{dodgson}) gives%
\begin{equation}
\left. 
\begin{array}{c}
\left\vert 
\begin{array}{ccc}
a_{11}-\lambda  & a_{12} & a_{13} \\ 
a_{21} & a_{22}-\lambda  & a_{23} \\ 
a_{31} & a_{32} & a_{33}-\lambda 
\end{array}%
\right\vert \left( a_{22}-\lambda \right) = \\ 
\\ 
\left\vert 
\begin{array}{cc}
\left\vert 
\begin{array}{cc}
a_{11}-\lambda  & a_{12} \\ 
a_{21} & a_{22}-\lambda 
\end{array}%
\right\vert  & \left\vert 
\begin{array}{cc}
a_{21} & a_{22}-\lambda  \\ 
a_{31} & a_{32}%
\end{array}%
\right\vert  \\ 
\left\vert 
\begin{array}{cc}
a_{12} & a_{13} \\ 
a_{22}-\lambda  & a_{23}%
\end{array}%
\right\vert  & \left\vert 
\begin{array}{cc}
a_{22}-\lambda  & a_{23} \\ 
a_{32} & a_{33}-\lambda 
\end{array}%
\right\vert 
\end{array}%
\right\vert .%
\end{array}%
\right.   \label{dodgson 3}
\end{equation}%
Let $\mu _{1}$ and $\mu _{2}$ the eigenvalues of the submatrix%
\begin{equation*}
\left( 
\begin{array}{cc}
a_{11} & a_{12} \\ 
a_{21} & a_{22}%
\end{array}%
\right) .
\end{equation*}%
Since $a_{12}a_{21}>0$, then $\mu _{1}$ and $\mu _{2}$ are real and we have%
\begin{equation}
\mu _{1}<a_{22}<\mu _{2}.\text{ \ \ }  \label{ineg}
\end{equation}%
By application of the Lemma, we have%
\begin{equation*}
\left\vert 
\begin{array}{cc}
a_{21} & a_{22}-\mu _{i} \\ 
a_{31} & a_{32}%
\end{array}%
\right\vert \left\vert 
\begin{array}{cc}
a_{12} & a_{13} \\ 
a_{22}-\mu _{i} & a_{23}%
\end{array}%
\right\vert >0,\ i=1,\ 2,
\end{equation*}%
which gives%
\begin{equation*}
\left( a_{22}-\mu _{i}\right) P_{3}\left( \mu _{i}\right) <0,\ i=1,\ 2,
\end{equation*}%
where $P_{3}\left( \lambda \right) $ denotes the characteristic polynomial
of $A$ for $m=3$. Using (\ref{ineg}), we deduce that $P_{3}\left( \mu
_{1}\right) <0$ and $P_{3}\left( \mu _{2}\right) >0.$ Since $\underset{%
\lambda \rightarrow -\infty }{\lim }P_{3}\left( \lambda \right) =+\infty $
and $\underset{\lambda \rightarrow +\infty }{\lim }P_{3}\left( \lambda
\right) =-\infty ,$ we deduce that $P_{3}\left( \lambda \right) $ has three
real roots whom are the eigenvalues of the matrix $A$ when $m=3.$\newline
Suppose that the eigenvalues of any symmetrizable matrix until the order $m-1
$ are real and prove that the property is also true for the order $m$. By
application of The Dodgson's algorithm (\ref{dodgson}) to the matrix $%
A-\lambda I_{m}$ and the formula (\ref{formula sign}), we have%
\begin{equation*}
\left. 
\begin{array}{c}
P_{m}\left( \lambda \right) .\det \left[ \left( a_{i,j}-\lambda
I_{m-2}\right) _{\substack{ i\neq k,l \\ j\neq k,l}}\right] \leq  \\ 
\\ 
\det \left[ \left( a_{i,j}-\lambda I_{m-1}\right) _{\substack{ i\neq l \\ %
j\neq l}}\right] \det \left[ \left( a_{i,j}-\lambda I_{m-1}\right) 
_{\substack{ i\neq k \\ j\neq k}}\right] 
\end{array}%
\right. ,
\end{equation*}%
for all real $\lambda $, where $P_{m}\left( \lambda \right) $ denotes the
characteristic polynomial of A. Let $\mu _{1}<\mu _{2}<....<\mu _{m-1}$ the
eigenvalues of $\left[ \left( a_{i,j}-\lambda I_{m-1}\right) _{\substack{ %
i\neq l \\ j\neq l}}\right] $ which is a symmetrizable submmatrix of $%
A-\lambda I_{m},$ then%
\begin{equation*}
P_{m}\left( \mu _{p}\right) \det \left[ \left( a_{i,j}-\mu
_{p}I_{m-2}\right) _{\substack{ i\neq k,l \\ j\neq k,l}}\right] <0,\ \
p=1,...,m-1.
\end{equation*}%
The above inequality is strict because the eigenvalues of $\left(
a_{i,j}-\lambda I_{m-1}\right) _{\substack{ i\neq l \\ j\neq l}}$ interlace
at the same time with those of the two matrices $A$ and $\left[ \left(
a_{i,j}-\lambda I_{m-2}\right) _{\substack{ i\neq k,l \\ j\neq k,l}}\right] $%
.\ Since this last matrix is a submatrix of the $(m-1)$ order matrix $\left[
\left( a_{i,j}-\lambda I_{m-1}\right) _{\substack{ i\neq l \\ j\neq l}}%
\right] $ and then their corresponding eigenvalues interlace, we conclude
that the sequence $\left\{ P_{m}\left( \mu _{p}\right) \right\} _{1\leq
p\leq m-1}$ changes sign $(m-1)$ times. But to deduce that the
characteristic polynomial$\ $of $A$ has m roots, it must change sign $(m+1)$
times. To find the two remaining times we treat two cases:\newline
When $m$ is odd, then the first and last terms of the sequence $\left\{
P_{m}\left( \mu _{p}\right) \right\} _{1\leq p\leq m-1}$ are respectively%
\begin{equation*}
P_{m}\left( \mu _{1}\right) <0\text{ and }P_{m}\left( \mu _{m-1}\right) >0.
\end{equation*}%
Then using the limits 
\begin{equation*}
\underset{\lambda \rightarrow -\infty }{\lim }P_{m}\left( \lambda \right)
=+\infty \text{ and }\underset{\lambda \rightarrow +\infty }{\lim }%
P_{m}\left( \lambda \right) =-\infty ,
\end{equation*}%
we deduce that the characteristic polynomial$\ $of $A$ has $m$ roots.\newline
When $m$ is even, they are respectively%
\begin{equation*}
P_{m}\left( \mu _{1}\right) <0\text{ and }P_{m}\left( \mu _{m-1}\right) <0,
\end{equation*}%
then using the limits 
\begin{equation*}
\underset{\lambda \rightarrow \pm \infty }{\lim }P_{m}\left( \lambda \right)
=+\infty ,
\end{equation*}%
we get the two remaining roots of $P_{m}\left( \lambda \right) $. This ends
the proof of Theorem (\ref{nonsymmetric I.T.}).
\end{proof}

\begin{remark}
When the symmetrizable matrix $A$\ has a multiple eigenvalue $\lambda $\ of
algebraic multiplicity equal to $r$, then the characteristic polynomial (\ref%
{Pk})\ of the matrix $\left[ \left( a_{i,j}-\lambda I_{m-1}\right) 
_{\substack{ i\neq k \\ j\neq k}}\right] $\ satisfies (\ref{Alter}) for
p=1,..., (m-r+1) and Theorem \ref{nonsymmetric I.T.} remains valid.
\end{remark}

\begin{acknowledgement}
The author gratefully acknowledge Qassim University, represented by the
Deanship of Scientific Research, on the material support for this research
under the number (3388 ) during the academic year 1436 AH / 2015 AD.
\end{acknowledgement}

\end{document}